\documentclass{article}[12pt]
\usepackage{color}
\setlength{\hoffset}{-0.5in}\hoffset-0.5in
\setlength{\textwidth}{15cm}

\usepackage{graphicx}
\usepackage{amssymb,amsmath,amsthm,amsfonts}
\usepackage{verbatim}

\topmargin = 20pt
\voffset = -20pt
\addtolength{\textheight}{2cm}

\newtheorem{theorem}{Theorem}[section]

\newtheorem{lemma}[theorem]{Lemma}

\theoremstyle{definition}

\title{Energy growth for a nonlinear oscillator coupled to a monochromatic wave}
\date{\today}
\author{Dmitry Turaev, Christopher Warner, Sergey Zelik\\ \small{Imperial College, London, and University of Surrey}}

\begin{document}
\maketitle

\subsubsection*{Abstract}
{\small A system consisting of a chaotic (billiard-like) oscillator coupled to
a linear wave equation in the three-dimensional space is considered. It is shown that the chaotic behaviour of the oscillator
can cause the transfer of energy from a monochromatic wave to the oscillator, whose energy can grow without bound.}

\section{Setting the problem and result}
The system we consider (a linear wave coupled to an oscillator) is formally defined by the Hamiltonian
\begin{equation}
H=\frac{1}{2}\left(p_y^2+p_z^2 \right) + V(y,z) +\epsilon k(y,z)\int_{\|{\bf x}\|\leq1} u({\bf x}, t)\ \text{d}^3{\bf x}
+ \frac{1}{2}\int\left(u_t^2 + (\nabla_x u)^2 \right)\ \text{d}^3{\bf x}, \label{FullEnergyFunctional}
\end{equation}
where $u({\bf x},t)$, the massless Klein-Gordon field, is a scalar function on $\mathbb{R}^3\times \mathbb{R}^1$, and $(y,z)\in\mathbb{R}^2$ are coordinates
in the configuration space of the oscillator ($(p_y=\dot y,p_z=\dot z)\in\mathbb{R}^2$ are the corresponding momenta).
The smooth potential $V(y,z)$ is bounded from below and tends to infinity as $\|y,z\|\to\infty$. To avoid technicalities, we
assume that $V$ equals to infinity outside a bounded domain in the $(y,z)$ plane.
The interaction coefficient $k(y,z)$ is smooth and bounded along with the first derivatives, and $\epsilon$ is small.

The corresponding equations of motion are
\begin{equation}
u_{tt}-\Delta u=-\epsilon k(y(t),z(t))\xi({\bf x}), \label{WaveEquation}
\end{equation}
where $\xi({\bf x})=\left\{ \begin{array}{cc} 1 & \|{\bf x}\|\leq1 \\
0 & \|{\bf x}\|>1 \end{array}\right.$ (the characteristic function of the unit ball in the $\bf x$-space), and
\begin{equation}\begin{split}
\ddot y + \frac{\partial}{\partial y}\left(V(y,z) \right) &= -\epsilon k'_y \int_{\|{\bf x}\|\leq1}u({\bf x},t)\ \text{d}^3{\bf x}, \\
\ddot z + \frac{\partial}{\partial z}\left(V(y,z) \right) &= -\epsilon k'_z \int_{\|{\bf x}\|\leq1}u({\bf x},t)\ \text{d}^3{\bf x}. \label{System1}
\end{split}\end{equation}
We may think of the oscillator as being located in the unit ball in the $\bf x$-space and emitting/receiving the wave $u({\bf x}, t)$.
It can be shown (similar to \cite{label41,ScatteringTheory,treschev}) that if the initial energy of the wave is finite
(i.e. $\int\left(u_t^2 + (\nabla_x u)^2 \right)\ \text{d}^3{\bf x}<\infty$ at $t=0$), then
the field $u$ tends to a constant and, for a typical choice of the interaction coefficient $k(y,z)$,\footnote{For a general choice of $k(y,z)$
the state of oscillator may approach an invariant set on which the function $k(y,z)$ stays constant, see more in \cite{label41,treschev}} the energy of the oscillator
decreases, and it comes to a rest at some stationary point of the potential function as $t\to+\infty$, i.e. the energy flows from the oscillator
to the field and is carried away to infinity. In this paper we show that if the wave has infinite energy, then an opposite process may take place.
Namely, when the oscillator operates in a chaotic regime, the energy can be pumped from the field to the oscillator, and the oscillator's
kinetic energy can increase up to any given value.

The field equation (\ref{WaveEquation}) can be explicitly resolved:
\begin{equation}
u({\bf x},t)= u_0({\bf x},t)+ \frac{\epsilon}{4\pi}\int \frac{k(z(t-\|{\bf s}\|),y(t-\|{\bf s}\|))}{\|s\|}\xi({\bf x-s})\ \text{d}^3{\bf s}, \label{WaveSolution}
\end{equation}
where $u_0$ is the solution to the homogeneous wave equation. We take as $u_0$ a monochromatic standing wave of infinite energy:
\begin{equation}\label{sphst}
u_0({\bf x},t)=K\sin(\omega t) \int \alpha_{\bf k} \cos ({\bf k} \cdot {\bf x})\; \delta (\omega-\|{\bf k}\|) d^3{\bf k}
\end{equation}
(the fact that it is a standing wave is not very important; the fact that the frequency spectrum is discrete and finite is used in an essential way).
By placing (\ref{WaveSolution}) into (\ref{System1}) we obtain a system of delayed differential equations
\begin{equation}\begin{split}
\ddot y + \frac{\partial}{\partial y}\left(V(y,z) \right) = &-A_\omega \epsilon k'_y(y,z) \sin(\omega t)-\epsilon^2 k'_y\int_0^2 k(z(t-s),y(t-s))\mathcal{P}(s)\ \text{d}s, \\
\ddot z + \frac{\partial}{\partial z}\left(V(y,z) \right) = &-A_\omega \epsilon k'_z(y,z) \sin(\omega t)-\epsilon^2 k'_z\int_0^2 k(z(t-s),y(t-s))\mathcal{P}(s)\ \text{d}s,
\label{System2}\end{split}\end{equation}
where
\begin{equation}\label{awww}
A_\omega=K \int \alpha_{\bf k} \xi({\bf x}) \cos ({\bf k} \cdot {\bf x}) \;\delta (\omega-\|{\bf k}\|) d^3{\bf k} d^3 {\bf x},
\end{equation}
and $\displaystyle \mathcal{P}(s)=\frac{\pi}{12} (16-12s+s^3)s$ (we denote here $s=\|\bf s\|$).
This is a Hamiltonian system, subject to a conservative periodic perturbation of order $\epsilon$ and a dissipative\footnote{We are
not showing here that the delay term always leads to a dissipation of energy. In order to get some insight, one can check that in the linear case
(i.e. quadratic potential $V$ and linear coefficient of interaction $k$) the addition of this term shifts the spectrum to the left of the imaginary axis,
i.e. creates dissipation (however, as we consider bounded functions $k$ and the potential $V$ that is infinite outside a bounded domain, the linear case
is not the subject of this paper).} correction of order $\epsilon^2$. Namely, if we drop
the $O(\epsilon^2)$-terms (those including the delay), system (\ref{System2}) will be defined by a time-dependent Hamiltonian function
\begin{equation}\label{hamnd}
H=\frac{1}{2}\left(p_y^2+p_z^2 \right) + V(y,z) +\epsilon A_\omega k(y,z) \sin(\omega t).
\end{equation}
In general, the systems with a time-dependent Hamiltonian do not preserve energy, and the periodic forcing may, in fact, lead to an unbounded growth of energy.
Importantly, as the kinetic energy grows, the forcing becomes effectively slow. It is known \cite{HamiltonianGrowth,D1,D2,label1} that if a Hamiltonian system behaves
chaotically at all sufficiently large energies, then adding a slow periodic forcing creates orbits of unbounded energy growth. Similar to \cite{label1},
we will show that in the system of type (\ref{hamnd}) this growth is linear in time; namely, one may construct orbits for which
the energy gain per period of the force is bounded from below by a non-zero constant of order $\epsilon$. Using the fact that the delay terms which can lead to a dissipation
of energy are of order $\varepsilon^2$, i.e. they are much smaller than the energy gain, we show below that the full system (\ref{System2}) has, for sufficiently small
$\epsilon$, solutions for which the energy grows up to any given value, linearly in time.

Our method does not allow us to show the existence of orbits for which the energy tends to infinity (the larger the energy value we want to achieve, the smaller
value of $\epsilon$ we have to take). Similarly, our approach is not applicable to the case of non-zero mass in the wave equation. However, we conjecture
that such orbits do exist in system (\ref{System2}),
like they do in the time-dependent Hamiltonian system defined by (\ref{hamnd}) (see \cite{label1}). We also think that our approach (based
on a reduction to an invariant manifold) is interesting in its own right. We stress that the mechanism of the sufficiently fast (linear) energy growth we obtain here
is based on the chaotic behaviour of the oscillator. Would the oscillator be integrable, the existence of adiabatic invariants \cite{AKN,Ar} would
impede a possible energy growth in the shortened system (\ref{hamnd}), and it seems plausible that the $O(\epsilon^2)$-dissipation due to delayed terms should
arrest the energy growth completely in this case. So, the picture we propose is the following: while an integrable oscillator that interacts with a monochromatic wave
(in $\mathbb{R}^3$) should lose energy, a chaotic oscillator may take the energy from the wave. Note that the reported effect cannot be immediately interpreted as a resonant
phenomenon: since the potential is infinite outside a bounded domain, the region of allowed motions (the Hill's region) in the configuration plane $(y,z)$
is always bounded, and since the velocities $p_y$ and $p_z$ tend to infinity as the energy grows, it follows that the characteristic return times tend to zero
and become much smaller than the period of external force.

In order to formulate the result precisely, we need to define what do we mean by the "chaotic oscillator". Consider a system of ODE's defined by the Hamiltonian
\begin{equation}\label{hamn0}
H=\frac{1}{2}\left(p_y^2+p_z^2 \right) + V(y,z)
\end{equation}
(the Hamitlonian (\ref{hamnd}) at $\epsilon=0$). As we mentioned, we assume that $V$ is infinite outside a certain bounded region $D$. Following \cite{label1},
assume that for each
sufficiently large $h$, the system defined by (\ref{hamn0}) has, in the energy level $H=h$, a pair of hyperbolic periodic orbits $L_a$ and $L_b$ such that
the unstable manifold of $L_a$ has an orbit $\Gamma_{ab}$ of transverse intersection with the stable manifold of $L_b$, and
the unstable manifold of $L_b$ has an orbit $\Gamma_{ba}$ of transverse intersection with the stable manifold of $L_a$. Moreover, we assume that the orbits
$L_a$, $L_b$, $\Gamma_{ab}$ and $\Gamma_{ba}$ depend continuously on $h$. When these conditions are fulfilled we call system (\ref{hamn0}) a chaotic oscillator.
It is well known \cite{Shilnikov, AShi} that the existence of the transverse heteroclinic cycle implies a chaotic behaviour indeed. Namely, fix a value of $h$, and
take a sufficiently small neighbourhood $U$ of the heteroclinic cycle $L_a\cup L_b \cup \Gamma_{ab} \cup \Gamma_{ba}$ in the energy level $H=h$.
Then the set of all orbits that stay in $U\cap \{H=h\}$ is in one-to-one correspondence with arbitrary sequences of the symbols $a$ and $b$ (one round made by the orbit
near $L_a$ is coded by $a$, and a round near $L_b$ is coded by $b$).

The basic example of chaotic oscillators in our setting is given by systems with billiard-like potentials.
Namely, let $D\subset\mathbb{R}^2$ be the bounded domain
such that $V(y,z)$ is finite inside $D$ and infinite outside of $D$. Let the boundary of $D$ consist of a finite number of smooth arcs, $S_1,\dots,S_n$,
joined at corner points. Let $C^r$-smooth functions
$Q_1(x,y),\dots,Q_n(x,y)$ be such that, for each $j=1,\dots,n$, the function $Q_j$ is defined in a neighbourhood
of the boundary arc $S_j$, the arc $S_j$ is a level line of $Q_j$ (i.e.  $Q_j(x,y)|_{(x,y)\in S_j}=const$), and $\nabla Q_j\neq 0$ in the neighborhood of $S_j$.
We will call the potential $V$ that equals to infinity outside of $D$ a billiard-like potential if there exist
$C^r$-smooth, strictly monotonic functions $W_1(Q),\dots, W_n(Q)$ such that for each $j=1,\dots,n$ the potential $V$ in a small neighbourhood of the arc $S_j$ is given by
\begin{equation}\label{pot}
V(x,y)=W_j(Q_j(x,y))
\end{equation}
(we do not include the corner points into the arcs $S_j$, i.e. they are open intervals, so their small open neighbourhoods do not need to contain the corner points,
hence even if two arcs join at a corner point, their small neighbourhoods where (\ref{pot}) holds do not need to intersect, i.e. no relation between the corresponding
functions $W_j$ arises). By scaling the momenta $p_{y,z}$ to $\sqrt{h}$, system (\ref{hamn0}) on the energy level $H=h$ transforms into the system
\begin{equation}\label{hamnh}
H=\frac{1}{2}\left(p_y^2+p_z^2 \right) + \frac{1}{h}V(y,z)
\end{equation}
on the energy level $H=1$. One can easily check that if $V$ is a billiard-like potential (i.e. it is defined by (\ref{pot}) near the boundary arcs), then
the family of Hamiltonians (\ref{hamnh}) (with $h^{-1}$ being a small parameter) satisfies conditions of \cite{label4} which guarantee that the flow defined
by such Hamiltonian is an approximation, at $h$ large enough, to the billiard flow in $D$. Namely, billiard in $D$ is the mechanical dynamical system
which describes the following motion of a point mass \cite{label10}: the particle moves inertially (with a constant velocity) inside the region $D$ in a plane until it hits
the boundary of $D$, then the particle is reflected according to the elastic reflection law, the angle of reflection is the angle of incidence, and so on.
As it follows from \cite{label4}, the time-shift maps by the billiard flow in $D$ and by the Hamiltonian flow defined by (\ref{hamnh}) with a billiard-like potential $V$
and $h$ large enough are $C^r$-close outside the
set of singular billiard orbits (i.e. such billiards orbits which enter a corner point or are tangent to the boundary of the billiard domain $D$). Therefore, if the billiard
in $D$ has a heteroclinic cycle of two hyperbolic periodic orbits $L_a$ and $L_b$ and two transverse heteroclinic orbits $\Gamma_{ab}$ and $\Gamma_{ba}$ and neither of these
orbits is singular, then system (\ref{hamnh}) in the energy level $H=1$ (so system (\ref{hamn0}) in the energy level $H=h$) also has such heteroclinic cycle for all $h$ large
enough, i.e. system (\ref{hamn0}) with a billiard-like potential $V$ is a chaotic oscillator according to our definition.
In particular, the oscillator defined by a billiard-like potential is chaotic when the underlying billiard is dispersive: all the boundary arcs are concave and meet each other
at non-zero angles at the corner points \cite{Sinai,BS,label11,label10,label4}.

Let $L_a$ and $L_b$ be the two non-singular hyperbolic periodic orbits of the billiard in $D$ that are connected by the transverse non-singular heteroclinics $\Gamma_{ab}$
and $\Gamma_{ba}$. Fix the speed of the particle in the billiard to be equal to $1$, and let $T_c$ (where $c=a,b$) be the period of $L_c$, and
$(y_c(t),z_c(t))|_{t\in[0,T_c]}$ be the equation of the orbit $L_c$. Denote
\begin{equation}\label{spdvl}
v_c=\frac{1}{T_c}\int_0^{T_c}k(y_c(t),z_c(t)) dt.
\end{equation}
Assume
\begin{equation}\label{vabc}
v_a\neq v_b.
\end{equation}

\begin{theorem}\label{mainthm}
Consider a system (\ref{WaveEquation}), (\ref{System1}) that describes a linear massless scalar field interacting with a chaotic oscillator,
with a billiard-like potential $V$ and the interaction coefficient $k(y,z)$ bounded with first derivatives.
Let condition (\ref{vabc}) hold. Then there exists $h_0$ such that for each $\omega$ for which $A_\omega\neq 0$ (see (\ref{awww}))
and for any $h_1>h_0$ there exists $\epsilon_0(h_1)>0$ such that for all $\epsilon\in (0,\epsilon_0)$ the system has a solution
with the wave component $u$ given by (\ref{WaveSolution}),(\ref{sphst}) and the oscillator component $(y(t),z(t),p_y(t),p_z(t))$ reaching from
the energy $h_0$ at $t=0$ to the energy $h_1$ at some finite $t$ (the energy of the oscillator is given by (\ref{hamn0})).
\end{theorem}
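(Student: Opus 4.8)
\noindent\emph{Proof strategy.}

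The plan is to first establish linear energy growth for the time-periodic Hamiltonian system obtained from (\ref{System2}) by discarding the $O(\epsilon^{2})$ delay terms --- the system with Hamiltonian (\ref{hamnd}) --- and then to transfer the construction to the full delay-differential system (\ref{System2}), exploiting that the delay term is smaller by a factor $\epsilon$ than the energy gain produced. Fix the target $h_{1}$ and work on the band of oscillator energies $h\in[h_{0},h_{1}]$. The scaling $p_{y,z}\mapsto\sqrt{h}\,p_{y,z}$, $t\mapsto t/\sqrt{h}$ turns the unperturbed oscillator (\ref{hamn0}) on the level $H=h$ into (\ref{hamnh}) on the level $H=1$, whose flow is, by \cite{label4}, $C^{r}$-close for $h$ large to the billiard flow in $D$ away from singular orbits, while the forcing becomes $\epsilon A_{\omega}k\sin(\omega t/\sqrt{h})$ --- of amplitude $\epsilon$ and, as the energy grows, ever slower relative to the oscillator's return time. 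Since the billiard heteroclinic cycle $L_{a}\cup L_{b}\cup\Gamma_{ab}\cup\Gamma_{ba}$ is non-singular and depends continuously on $h$, it persists for all $h\geq h_{0}$ once $h_{0}$ is large, and the two-dimensional manifolds $\mathcal{C}_{c}=\bigcup_{h\in[h_{0},h_{1}]}L_{c}(h)$, $c=a,b$, are normally hyperbolic invariant cylinders (with normal hyperbolicity of order $\sqrt{h}$) in the extended phase space, joined by the transverse heteroclinic connections carried by $\Gamma_{ab},\Gamma_{ba}$. As the $O(\epsilon)$ forcing --- and, later, the $O(\epsilon^{2})$ delay term --- is far below this hyperbolicity, the cylinders and their transverse connections survive for $\epsilon$ small, so that, as in \cite{Shilnikov,AShi}, any prescribed finite itinerary in the symbols $a,b$ is realised by an orbit making the corresponding sequence of passages near $L_{a}$ and $L_{b}$, with the switching times free to be prescribed finely (for $h$ large).

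Next I would track the oscillator energy $E=\tfrac12(p_{y}^{2}+p_{z}^{2})+V$ along such an orbit of (\ref{hamnd}). Hamilton's equations give $\dot{E}=-\epsilon A_{\omega}\,\frac{d}{dt}\!\big[k(y(t),z(t))\big]\sin(\omega t)$. On a passage near $L_{c}$ the orbit traverses $L_{c}$ many times --- it dwells long along the stable and unstable manifolds --- so $\frac{d}{dt}k$ is fast with zero mean; integrating by parts and averaging leaves, to leading order, only boundary terms, and $E$ changes over the passage by $-\epsilon A_{\omega}\big[(k-v_{c})\sin\omega t\big]$ evaluated between the entry and exit points, with $v_{c}$ the mean (\ref{spdvl}). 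Over a switch $c\to c'$ the transition along $\Gamma_{cc'}$ is comparatively instantaneous, and a short computation shows that the tail of the $L_{c}$-passage, the transition, and the head of the $L_{c'}$-passage combine --- the endpoint values of $k$ cancelling identically --- into $\Delta E\approx\epsilon A_{\omega}(v_{c}-v_{c'})\sin(\omega t_{*})$, with $t_{*}$ the switching time. Here is where (\ref{vabc}) enters: if $v_{a}=v_{b}$ this principal term vanishes and an adiabatic invariant (cf.\ \cite{AKN,Ar}) would bar the growth, whereas if $v_{a}\neq v_{b}$ one takes the itinerary $a,b,a,b,\dots$ and, using the freedom in the $t_{*}$'s, makes $\sin(\omega t_{*})$ carry the sign $\operatorname{sign}(A_{\omega}(v_{a}-v_{b}))$ at each $a\to b$ switch and the opposite sign at each $b\to a$ switch, so every switch contributes $+\epsilon|A_{\omega}|\,|v_{a}-v_{b}|$ to $\Delta E$. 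Thus after $N\gtrsim(h_{1}-h_{0})/(\epsilon|A_{\omega}|\,|v_{a}-v_{b}|)$ switches --- a process occupying a time of order $\epsilon^{-1}$ --- the energy has risen from $h_{0}$ past $h_{1}$, modulo lower-order terms and the passages through resonances between $\omega$ and the scaled return frequencies of $L_{a},L_{b}$ (controlling these needs $h_{0}$ large, and contributes to the dependence $\epsilon_{0}=\epsilon_{0}(h_{1})$). Realising the resulting pseudo-orbit by a genuine orbit through the hyperbolic shadowing near $\mathcal{C}_{a}\cup\mathcal{C}_{b}$ yields, for (\ref{hamnd}), an orbit climbing from $h_{0}$ to $h_{1}$ in finite time --- the analogue in our setting of \cite{label1,HamiltonianGrowth,D1,D2}.

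It remains to reinstate the $O(\epsilon^{2})$ delay term, i.e.\ to produce a genuine solution of (\ref{System2}), equivalently of (\ref{WaveEquation}),(\ref{System1}) with the wave given by (\ref{WaveSolution}),(\ref{sphst}). Since $k$ and $\mathcal{P}$ are bounded, the delay term is $O(\epsilon^{2})$ in the equation and $O(\epsilon^{2}\sqrt{h})$ in $\dot{E}$, so between consecutive switches it alters the energy balance by $O(\epsilon^{2})$, which is negligible against the $\epsilon|A_{\omega}|\,|v_{a}-v_{b}|$ produced there; hence the per-switch gain remains bounded below by a positive multiple of $\epsilon$, whatever the sign of the delay's effect (which we need not determine). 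The cylinders $\mathcal{C}_{a},\mathcal{C}_{b}$, their transverse connections and the shift dynamics survive the extra $O(\epsilon^{2})$ term: although (\ref{System2}) is infinite-dimensional, the delay is bounded (by $2$) and of size $\epsilon^{2}$, so it is a small perturbation of the flow of (\ref{hamnd}) on histories, and the invariant-manifold and shadowing apparatus applies near $\mathcal{C}_{a}\cup\mathcal{C}_{b}$ --- this is the ``reduction to an invariant manifold'' referred to in the introduction, and it is crucial that the control of the orbit over the long (order $\epsilon^{-1}$) time interval comes from the normal hyperbolicity (of rate $\sim\sqrt{h}$) rather than from Gronwall-type estimates, which would blow up over such times. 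Running the resulting orbit on a time interval $[-2,T]$ --- the length-$2$ pre-history making the delay term and the retarded field (\ref{WaveSolution}) well defined for $t\geq0$ --- with $E=h_{0}$ at $t=0$ and $E=h_{1}$ at $t=T<\infty$ gives the asserted solution, $h_{0}$ large and $\epsilon_{0}=\epsilon_{0}(h_{1})$ small enough for all the perturbative steps to apply.

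I expect the main obstacle to be exactly this last transfer to (\ref{System2}): justifying that the normally hyperbolic cylinders and the attached symbolic/shadowing dynamics persist once the infinite-dimensional delay perturbation is turned on, and that the cumulative effect of the $O(\epsilon^{2})$ delay over the time of order $\epsilon^{-1}$ needed to reach $h_{1}$ stays small against $h_{1}-h_{0}$, so that it cannot undo the per-switch increase. A secondary difficulty, already present for (\ref{hamnd}), is the lower bound on the per-switch energy gain uniform over $h\in[h_{0},h_{1}]$, in particular the control of the passages through resonances, which is where $\epsilon_{0}$ comes to depend on $h_{1}$ and why the method stops short of producing energies tending to infinity.
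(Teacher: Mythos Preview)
Your overall strategy is sound and matches the paper's in its two-stage structure: first exhibit linear energy growth for the periodically forced oscillator, then argue that the $O(\epsilon^{2})$ delay term cannot undo it. Your energy bookkeeping via integration by parts, yielding a per-switch increment $\epsilon A_{\omega}(v_{c}-v_{c'})\sin(\omega t_{*})$, is equivalent to the paper's computation, which differentiates (\ref{hamnd}) directly to get $\dot h=\epsilon\omega A_{\omega}k(y,z)\cos\theta+O(\epsilon^{2})$ and then chooses the code so that $c(t)=a$ when $A_{\omega}\cos\theta>0$ and $c(t)=b$ otherwise; both lead to a gain $\sim\epsilon|A_{\omega}|(v_{a}-v_{b})$ per forcing period. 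One minor point: your worry about ``passages through resonances between $\omega$ and the scaled return frequencies'' is a red herring in this regime. After the $\sqrt{h_{0}}$ rescaling the forcing phase evolves at rate $\omega/\sqrt{h_{0}}$, so $\cos\theta$ is effectively constant over each round near $L_{c}$; the averaging is just a Riemann-sum estimate, and no small divisors appear. This is not where the $h_{1}$-dependence of $\epsilon_{0}$ comes from.

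The genuine divergence from the paper is in how the delay is handled, and here the paper's route is both simpler and tighter than what you sketch. You propose to keep the delay equation as is and invoke persistence of the normally hyperbolic cylinders $\mathcal{C}_{a},\mathcal{C}_{b}$ and of the symbolic/shadowing dynamics directly in the infinite-dimensional semiflow on histories. That is plausible but heavy, and you rightly flag it as the main obstacle. The paper sidesteps it entirely: Lemma~\ref{TheLemma} shows, by an elementary Implicit Function Theorem argument, that for each compact set $K$ and small $\delta=\epsilon^{2}$ there is a $C^{r-1}$ function $\mu(X,s)$ such that replacing the delayed argument $X(t-s)$ by $\mu(X(t),s)$ produces an ordinary differential equation whose solutions are \emph{genuine} solutions of the delay equation (\ref{DelayEquation}) as long as they stay in $K$. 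The point is that at $\delta=0$ the backward flow of (\ref{lemma1}) is independent of $\mu$, so $\partial\phi/\partial\mu=0$ there, and the fixed-point equation $\mu=\phi(\mu,\delta)$ is solvable for small $\delta$. Applied to (\ref{System3}) this gives the finite-dimensional system (\ref{System4}), which is an $O(\epsilon^{2})$ (non-Hamiltonian) perturbation of (\ref{hamnd}); from that point on the paper works purely with ODEs and Poincar\'e maps in cross form, and the persistence of the partially hyperbolic set and the energy-drift construction come straight from \cite{label1}. This reduction is the paper's main technical contribution and is exactly the piece that dissolves the obstacle you anticipate; it also makes transparent why $\epsilon_{0}$ depends on $h_{1}$ --- the compact set $K$ must contain the whole energy band $[h_{0},h_{1}]$, and the IFT is applied on $K$.
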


The proof of the theorem occupies the rest of the paper. In Section \ref{invm} we prove an invariant manifold theorem which allows us to
reduce the system of delayed differential equations (\ref{System2}) to a four-dimensional non-autonomous system of ODE's, which is $O(\epsilon^2)$-close
to the Hamiltonian system given by (\ref{hamnd}). This reduction is possible on any bounded set of values of $(y,z,p_y,p_z)$ for sufficiently small $\epsilon$.
Note that the maximal value of $\epsilon$ for which we can guarantee the reduction increases as $p_y$ and $p_z$ grow, which is one of the reasons
why we show only bounded energy growth in Theorem \ref{mainthm}. In Section \ref{energy} we apply the construction of \cite{label1} (modified for the non-Hamiltonian
case) to the reduced system, and finish the proof.

\section{Invariant Manifold Theorem}
\label{invm}
In this section we reduce the infinite-dimensional system of differential equations with delay to a finite-dimensional system of ordinary
differential equations. Consider the system
\begin{equation}
\dot X(t)=F(X(t)) - \delta\int_0^\tau G(X(t-s),s)\ \text{d}s, \label{DelayEquation}
\end{equation}
where $X$ belongs to an $n$-dimensional smooth manifold $\mathbb{M}$, and the functions $F$ and $G$ are $C^r$-smooth. Let $F$ be such that the differential equation
\begin{equation}\label{dl0}
\dot X=F(X)
\end{equation}
has, for any initial condition, a solution defined for all $t\in(-\infty,+\infty)$ (for example, this differential equation is Hamiltonian with compact energy levels).
System (\ref{System2}) can be represented in this form with $\delta=\epsilon^2$. Namely, we introduce a variable $\theta\in \mathbb{S}^1$ and
rewrite (\ref{System2}) as
\begin{equation}
\begin{array}{l} \displaystyle
\dot y=p_y, \qquad \dot z=p_z, \qquad \dot\theta=\omega, \\ \displaystyle
\dot p_y = -\frac{\partial}{\partial y}\left(V(y,z) \right) - A_\omega \epsilon k'_y(y,z) \sin(\theta)-\epsilon^2 k'_y\int_0^2 k(z(t-s),y(t-s))\mathcal{P}(s)\ \text{d}s, \\ \\
\displaystyle
\dot p_z = - \frac{\partial}{\partial z}\left(V(y,z) \right) -A_\omega \epsilon k'_z(y,z) \sin(\theta)-\epsilon^2 k'_z\int_0^2 k(z(t-s),y(t-s))\mathcal{P}(s)\ \text{d}s,
\label{System3}\end{array}\end{equation}
so $X=(y,z,p_y,p_z,\theta=\omega t)\in \mathbb{R}^4\times \mathbb{S}^1$ here.

It is well known that given any continuous function $\hat X:[-\tau,0]\rightarrow R^n$ there exists a unique solution $X(t)$ of equation (\ref{DelayEquation})
such that $X(t)\equiv \hat X(t)$ at $t\in[-\tau,0]$. One can therefore view the evolution defined by equation
(\ref{DelayEquation}) as a semiflow in the space $C$ of continuous functions that act from $[-\tau,0]$ to $R^n$: the time-$s$ map of the semiflow takes
the initial condition $X(s)|_{[-\tau,0]}$ to the segment $X(s)|_{[t-\tau,t]}$ of the corresponding solution. A smooth function
$\mu: R^n\times [0,\tau]\to R^n$ defines a map $R^n\rightarrow C$ by the rule $X(s)=\mu(x,-s)$; the graph of such map is an invariant manifold for the semiflow
defined by (\ref{DelayEquation}) if the solution with the initial condition $\mu(X(0),-s)|_{s\in[-\tau,0]}$ satisfies
\begin{equation}\label{invc}
\mu(X(t),s)=X(t-s)
\end{equation}
for all $t\geq 0$ and $s\in[0,\tau]$. When such manifold exists, the restriction of system (\ref{DelayEquation}) onto it is a system of ordinary differential equations
\begin{equation}
\dot X= F(X) - \delta\int_0^\tau G(\mu(X,s),s)\ \text{d}s. \label{lemma1}
\end{equation}
In other words, the existence of a smooth function $\mu$ which satisfies the invariance condition (\ref{invc}) implies the existence of an $n$-parameter family of solutions
(parameterised by $X(0)$) to the delayed differential equation (\ref{DelayEquation}) which also solve the ordinary differential equation (\ref{lemma1}).

\begin{lemma}\label{TheLemma}
Given any compact subset $K$ of the $X$-space $\mathbb{M}$, for all sufficiently small $\delta$ there exists a $C^{r-1}$-smooth function $\mu: K\times [0,\tau]\to R^n$ such that
for any $X(0)\in K$ the solution $X(t)$ of (\ref{DelayEquation}) which starts with the initial condition $\mu(X(0),-s)|_{s\in[-\tau,0]}$ satisfies the invariance condition
(\ref{invc}) for the interval of $t$ values for which the solution stays in $K$.
\end{lemma}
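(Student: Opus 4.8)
The plan is to recast the invariance requirement (\ref{invc}) as a fixed-point equation for $\mu$ and to solve it by the contraction mapping principle, exploiting that the delay term in (\ref{DelayEquation}) carries the small factor $\delta$ while the delay window $[0,\tau]$ has a fixed length.

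For a candidate $C^{r-1}$ map $\mu$, let $\phi^t_\mu$ denote the (autonomous) flow of the reduced ordinary differential equation (\ref{lemma1}); since its right-hand side is $F+O(\delta)$ and $F$ generates a complete flow, $\phi^t_\mu$ is well defined for $|t|\le\tau$ once $\delta$ is small. The key observation is that (\ref{invc}), together with (\ref{lemma1}), is equivalent to the relation
\[
\mu(x,s)=\phi^{-s}_\mu(x),\qquad s\in[0,\tau].
\]
Indeed, if $\mu$ satisfies this relation and we set $X(t):=\phi^t_\mu(X(0))$ (which for $t\in[-\tau,0]$ coincides with $\mu(X(0),-t)$, the prescribed history), then the group property of the flow gives, for all $u\in[0,\tau]$,
\[
\mu(X(t),u)=\phi^{-u}_\mu\big(\phi^t_\mu(X(0))\big)=\phi^{t-u}_\mu(X(0))=X(t-u);
\]
substituting this into (\ref{lemma1}) turns it into (\ref{DelayEquation}), so by uniqueness $X(t)$ is the solution of (\ref{DelayEquation}) with that history, and the same identity with $u=s$ is precisely (\ref{invc}). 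Conversely, any $\mu$ obeying (\ref{invc}) must be of this form. Hence it suffices to find a fixed point of the operator $\mathcal T:\mu\mapsto\big((x,s)\mapsto\phi^{-s}_\mu(x)\big)$.

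At $\delta=0$ this operator has the fixed point $\mu_*(x,s)=\phi^{-s}_0(x)$, the backward time-$s$ shift along (\ref{dl0}), which is $C^r$ and globally defined by completeness of $F$. Fix a compact set $\hat K\supset K$ large enough that every orbit segment of length $\tau$ of (\ref{dl0}), and of (\ref{lemma1}) for $\delta$ small, that is relevant below remains in $\hat K$ (such $\hat K$ exists by completeness of $F$, if necessary after modifying $G$ outside $\hat K$, which does not affect solutions confined to $K$); then $F$, $G$ and $\mu_*$ are bounded together with their derivatives up to order $r$ on $\hat K$. Work in the complete metric space $\mathcal B$ of $C^{r-1}$ maps $\mu:\hat K\times[0,\tau]\to\hat K$ with $\|\mu-\mu_*\|_{C^{r-1}}\le1$. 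A Gronwall estimate for (\ref{lemma1}) and for its variational equations up to order $r-1$ over the bounded time $\tau$ gives $\|\mathcal T[\mu]-\mu_*\|_{C^{r-1}}\le C\delta$ for $\mu\in\mathcal B$, so $\mathcal T(\mathcal B)\subset\mathcal B$ for $\delta$ small.

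It remains to check that $\mathcal T$ is a contraction on $\mathcal B$. For $\mu_1,\mu_2\in\mathcal B$, the vector fields defining the flows $\phi_{\mu_1}$ and $\phi_{\mu_2}$ differ by $\delta\int_0^\tau\big(G(\mu_1(\cdot,u),u)-G(\mu_2(\cdot,u),u)\big)\,\text{d}u$, whose $C^{r-1}$-norm is $O\big(\delta\,\|\mu_1-\mu_2\|_{C^{r-1}}\big)$ precisely because $G$ is $C^r$; this is where the $C^r$-smoothness of the data is spent, and why $\mu$ is only asserted to be $C^{r-1}$. Comparing the two flows and their variational equations up to order $r-1$ over the fixed time $\tau$, Gronwall yields $\|\mathcal T[\mu_1]-\mathcal T[\mu_2]\|_{C^{r-1}}\le C'\delta\,\|\mu_1-\mu_2\|_{C^{r-1}}$, which is $<\frac12$ for $\delta$ small. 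Thus $\mathcal T$ has a unique fixed point $\mu\in\mathcal B$; its restriction to $K\times[0,\tau]$ is the desired function, and the equivalence of the second paragraph shows that the solution with history $\mu(X(0),-s)$ obeys the invariance condition exactly on the interval of $t$ for which $X(t)=\phi^t_\mu(X(0))$ remains in $K$. The one genuine obstacle is the self-referential structure of the problem — the reduced field in (\ref{lemma1}) depends on the unknown $\mu$, which is in turn defined through the flow of (\ref{lemma1}) — and this is exactly what the smallness of $\delta$ against the fixed delay length resolves; the remaining work is the routine bookkeeping of keeping orbit segments inside $\hat K$ and propagating the $C^{r-1}$ bounds through the variational equations.
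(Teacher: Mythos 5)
Your argument is essentially the paper's: both recast the invariance condition as the same fixed-point equation $\mu(x,s)=\phi^{-s}_\mu(x)$ for the backward flow of the reduced ODE (\ref{lemma1}) and solve it using the smallness of $\delta$ against the fixed delay length, with the loss from $C^r$ to $C^{r-1}$ occurring in the same place, namely the Lipschitz/differentiability properties of the substitution operator $\mu\mapsto G(\mu(\cdot,s),s)$. The only difference is that you close the argument by the contraction mapping principle, whereas the paper invokes the Implicit Function Theorem after observing that the Frechet derivative $\partial\phi/\partial\mu$ vanishes at $\delta=0$; these are interchangeable here, and your version (including the explicit verification that the fixed point yields solutions of the original delayed equation) is correct.
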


\begin{proof}
By the Cauchy-Picard-Lindel\"of theorem \cite{difur}, given any smooth functions $\mu$, $F$ and $G$, the ordinary differential equation (\ref{lemma1}) generates
a uniquely defined (on any given finite time interval) solution
for any initial condition from $K$ if $\delta$ is small enough. This solution depends smoothly on any parameter on which the system
depends smoothly. In particular, the solution depends smoothly on the function $\mu$. Thus, we will show below that the solution is
a $C^1$-function of $\mu$ which is considered as an element of the space of $C^{r-1}$-smooth functions.

Given $\mu$ and $\delta$, take any $X(t)\in K$, consider its backward orbit by equation (\ref{lemma1}), and let $X(t-s)$ be the point on this orbit
which corresponds to the (backward) shift to time $s$. Denote as $\phi(\mu,\delta)$ the map $K\times [0,\tau]\to R^n$ which (for given $\mu$ and $\delta$)
sends $X(t)\in K$ and $s\in[0,\tau]$ to $X(t-s)$, i.e. $\phi(\mu,\delta)$ is the backward flow of the ordinary differential equation (\ref{lemma1}).
The flow has the same smoothness as the equation, so if $\mu\in C^{r-1}$, then $\phi(\mu,\delta)$ is $C^{r-1}$ with respect to $X$ and $s$.
At $\delta=0$ the flow $\phi(\mu,\delta)$ is generated by equation (\ref{dl0}), so it is independent of $\mu$. We will show in a moment that $\phi$ depends $C^1$-smoothly
on $\mu$ and $\delta$. As $\phi(\mu,0)$ does not depend on $\mu$, it follows that the Frechet derivative $\displaystyle\frac{\partial \phi}{\partial \mu}$ vanishes at $\delta=0$.
Hence, by the Implicit Function Theorem, the equation
\begin{equation}\label{mphd}
\mu=\phi(\mu,\delta),
\end{equation}
has, for every small $\delta$, a unique solution $\mu\in C^{r-1}$. If we plug this particular $\mu$ into the right-hand side of (\ref{lemma1}), then
equation (\ref{mphd}) will exactly mean that condition (\ref{invc}) is satisfied by the solutions of (\ref{lemma1}). Thus, each of these solutions will
also solve the original delayed equation (\ref{DelayEquation}), which is the statement of the lemma.

Thus, to finish the proof, it remains to show the smooth dependence of the flow of (\ref{lemma1}) on $\mu$. It is well known that the solutions
of ordinary differential equations depend smoothly on the function in the right-hand side of the equation. So we are left to show that the so-called
Nemytsky operator (or substitution  operator) $\mathcal{N}$ which takes the function $\mu(X,s)$ to the function $G(\mu(X,s),s)$ is of class $C^1$
on the space of $C^{r-1}$-functions $\mu$, provided $G$ is $C^r$ as a function of $(X,s)$.
In order to do this, it is enough to check that the Frechet derivative of $\mathcal{N}$ at
a given function $\mu$ is the operator of multiplication to $\displaystyle\frac{\partial G}{\partial X}(\mu(X,s),s)$. For that, one needs to check that
$$\|G(\mu+\Delta\mu)- G(\mu)-G'(\mu)\cdot\Delta\mu\|_{_{C^{r-1}}}=o(\|\Delta\mu\|_{_{C^{r-1}}})$$
(we suppress, notationally, the dependence of $s$, so $G'$ denotes here the derivative of $G$ with respect to its first argument).
This relation is rewritten as
$$\left\|\int_0^1 (G'(\mu+\xi\Delta\mu)-G'(\mu))\ \text{d}\xi \cdot\Delta\mu \right\|_{C^{r-1}} =o(\|\Delta\mu\|_{_{C^{r-1}}})$$
which reduces to the obvious (since $G$ is $C^r$) claim that
$G'(\mu+\xi\Delta\mu)_{\stackrel{\longrightarrow}{C^{r-1}}} G'(\mu)$ as
$\Delta\mu_{\stackrel{\longrightarrow}{C^{r-1}}} 0$, uniformly for all $\xi\in[0,1]$.
\end{proof}

Applying this lemma to system (\ref{System3}), we find that for all sufficiently small $\epsilon$
it has a family of solutions which satisfy the system of ordinary differential equations
\begin{equation}\begin{split}
\ddot y + \frac{\partial}{\partial y}\left(V(y,z)\right) = &-A_\omega \epsilon k'_y(y,z) \sin(\theta)+\epsilon^2 \mathcal{F}_1(y,z,\dot y,\dot z,\theta,\epsilon),\\
\ddot z + \frac{\partial}{\partial z}\left(V(y,z)\right) = &-A_\omega \epsilon k'_z(y,z) \sin(\theta)+\epsilon^2 \mathcal{F}_2(y,z,\dot y,\dot z,\theta,\epsilon),\\
\dot \theta = & ~\omega,
\label{System4}\end{split}\end{equation}
where the smooth functions $\mathcal{F}_{1,2}$ incorporate the delay terms. Moreover, for all sufficiently small $\epsilon$,
every solution of this system satisfies the original  delayed equations (\ref{System2}) on the time interval the solution stays in a bounded ball in the
phase space (we may take this ball as large as we want; however, to increase the radius of the ball, we might need to take $\epsilon$ smaller).

\section{A partially-hyperbolic set and the energy drift}
\label{energy}

As we just have shown, it is enough to establish the existence of the solutions of growing energy in system (\ref{System4}).  By taking a sufficiently large $h_0$
and scaling time to $\sqrt{h_0}$, we rewrite this system as

\begin{equation}\begin{split}
\ddot y + \frac{1}{h_0}\frac{\partial}{\partial y}\left(V(y,z)\right) = &-A_\omega \frac{\epsilon}{h_0} k'_y(y,z) \sin(\theta)+
\frac{\epsilon^2}{h_0} \mathcal{F}_1(y,z,\sqrt{h_0}\dot y,\sqrt{h_0}\dot z,\theta,\epsilon),\\
\ddot z + \frac{1}{h_0}\frac{\partial}{\partial z}\left(V(y,z)\right) = &-A_\omega \frac{\epsilon}{h_0} k'_z(y,z) \sin(\theta)+
\frac{\epsilon^2}{h_0} \mathcal{F}_2(y,z,\sqrt{h_0}\dot y,\sqrt{h_0}\dot z,\theta,\epsilon),\\
\dot \theta = & ~\frac{\omega}{\sqrt{h_0}},
\label{System5}\end{split}\end{equation}

At $\epsilon=0$ the first two equations are independent on $\theta$:
\begin{equation}\begin{split}
\ddot y + \frac{1}{h_0}\frac{\partial}{\partial y}\left(V(y,z)\right) = & 0,\\
\ddot z + \frac{1}{h_0}\frac{\partial}{\partial z}\left(V(y,z)\right) = & 0.
\label{System6}\end{split}\end{equation}
As we mentioned, the assumption that our billiard-like oscillator is chaotic means that this system has, at sufficiently large $h_0$,
a uniformly-hyperbolic set $\Lambda$ in every energy level $H\geq 1$ (where the rescaled energy $H$ is given by (\ref{hamnh}) with $h=h_0$).
Namely, we take the two non-singular hyperbolic periodic orbits  $L_a$ and $L_b$  of the billiard in $D$ and the two
transverse non-singular heteroclinics $\Gamma_{ab}$ and $\Gamma_{ba}$ that connect them. As we mentioned, by  virtue of \cite{label4},
the hyperbolic heteroclinic cycle persists in the smooth Hamiltonian approximation (\ref{System6}) of the billiard, provided $h_0$ is large enough.
This means that system (\ref{System6}) has in every energy level $H=h\geq 1$ a pair of hyperbolic periodic orbits
$L_a(h)$ and $L_b(h)$ and the heteroclinic orbits, $\Gamma_{ab}(h)$ and $\Gamma_{ba}(h)$
where $\Gamma_{ab}$ is the transverse intesection of the unstable manifold of $L_a$ with the stable manifold of $L_b$, and similarly for $\Gamma_{ba}$:\\
$$\Gamma_{ab}\subseteq W^u(L_a)\cap W^s(L_b), \ \ \ \Gamma_{ba}\subseteq W^u(L_b)\cap W^s(L_a),$$
and these four orbits are close (at $h_0$ large enough) to the corresponding orbits of the billiard in the same energy level.

By \cite{Shilnikov}, the set $\Lambda(h)$ of all orbits which stay in a small neighbourhood (in the level set $H=h$)
of $L_a(h)\cup L_b(h)\cup \Gamma_{ab}(h) \cup \Gamma_{ba}(h)$ is a uniformly-hyperbolic set which is in one-to-one correspondence with the
set of all sequences of $a$'s and $b$'s. More precisely, in each energy level we take two small smooth cross-sections, $\Sigma_a$ and $\Sigma_b$,
to $L_a$ and $L_b$ respectively ( we assume the cross-sections depend on $h$ smoothly). Every orbit from $\Lambda$ must intersect
$\Sigma_a \cup \Sigma_b$ infinitely many times. The sequence of $a$'s and $b$'s denoting the corresponding cross-sections that the trajectory passes
through gives the code, $\{\xi_i\}_{i=-\infty}^{+\infty}$ ($\xi_i\in\{a,b\}$), of the trajectory. Namely, if $M_i$ is the sequence of points at which
an orbit from $\Lambda$ intersects $\Sigma_a \cup \Sigma_b$, then $M_i\in \Sigma_{\xi_i}$.

The flow induced by (\ref{System6}) on any given energy level $H=h$ defines Poicar\'e maps on these cross-sections, denoted by
\begin{equation}
\Pi_{cc'}:\Sigma_c\rightarrow\Sigma_{c'}, \ \ \ c\in\{a,b\}. \label{PoincareMaps}
\end{equation}
These maps are such that for a trajectory of (\ref{System6}) that intersects $\Sigma_a \cup \Sigma_b$ at a point $M_i\in\Sigma_c$ and then
at a point $M_{i+1}\in\Sigma_{c'}$ we have
\begin{equation}
M_{i+1}=\Pi_{cc'}M_i. \label{PoincareMaps2}
\end{equation}
In other words, $\Pi_{aa}$ and $\Pi_{bb}$ are the Poincare maps near the periodic orbits $L_a$ and, respectively, $L_b$, while
$\Pi_{ab}$ and $\Pi_{ba}$ correspond to a passage near the heteroclinics $\Gamma_{ab}$ and $\Gamma_{ba}$.
The theory of billiard-like potentials built in \cite{label4} implies that since the billiard orbits $L_a$,
$L_b$, $\Gamma_{ab}$, $\Gamma_{ba}$ are non-simgular, the Poincare maps $\Pi_{cc'}$ for system (\ref{System6}) are $C^r$-close, at $h_0$ large enough,
to the corresponding Poincare maps defined by the billiard flow. In particular, the hyperbolicity of these maps for the billiard flow (which follows
from the fact that the periodic orbits $L_a$ and $L_b$ are hyperbolic and the heteroclinic orbits $\Gamma_{ab}$ and $\Gamma_{ba}$ are transverse)
is inherited in system (\ref{System6}) for all $h_0$ large enough.

In fact, since each periodic orbit is saddle, one can represent the two-dimensional cross-sections $\Sigma_{a,b}$ as the cross product of two ceratin small intervals
\[\Sigma_a=U_a\times W_a,\ \ \ \Sigma_b=U_b\times W_b,\]
where $U_{a,b}\in\mathbb{R}$ correspond to contracting directions and $W_{a,b}\in\mathbb{R}$ correspond to expanding directions. By virtue of \cite{AShi},
since the heteroclinic orbits $\Gamma_{ab}$ and $\Gamma_{ba}$ are transverse, one can write the Poincar\'e maps (\ref{PoincareMaps}) in the so-called
cross form (see \cite{label8,label9}). Namely, there exist smooth functions
\begin{equation}
f_{cc'}:U_c\times W_{c'}\rightarrow U_c', \ \ \ g_{cc'}:U_c\times W_{c'}\rightarrow W_c, \label{CrossForm}
\end{equation}
(where $c,c'\in\{a,b\}$) such that a point $M_i=(u_i,w_i)\in\Sigma_{c}$ is mapped to $M_{i+1}=(u_{i+1},w_{i+1})\in\Sigma_{c'}$ by the map $\Pi_{cc'}$, if
and only if
\begin{equation}
f_{cc'}(u_i,w_{i+1})=u_{i+1},\ \ \ g_{cc'}(u_i,w_{i+1})=w_i. \label{CrossForm2}
\end{equation}
Moreover there exists $\lambda>0$ such that
\begin{equation}
\left\|\frac{\partial(f_{cc'},g_{cc'})}{\partial(u,w)} \right\|\leq\lambda<1. \label{CrossFormIneq}
\end{equation}
The latter inequality means, essentially, that the Poincar\'e maps $\Pi_{cc'}$ are contracting in the $u$-coordinate and expanding in the $v$-coordinate.

From \cite{label1} it follows that the operator
\begin{equation}
\{(u_i,w_i)\}_{i=-\infty}^{+\infty}\rightarrow\{(f_{\xi_{i-1}\xi_i}(u_{i-1},w_i),g_{\xi_i\xi_{i+1}}(u_i,w_{i+1}))\}_{i=-\infty}^{+\infty},
\end{equation}
is a contraction mapping for any code sequence $\xi=\{\xi_i\}_{i=-\infty}^{+\infty}$, which gives us
the existence and uniqueness of the orbit $L_{\xi}(h)\in\Lambda(h)$ with the given code $\xi$ (see \cite{Shilnikov,AShi}).
By fixing a code $\xi$, the family of the orbits with this code parameterised by $h$, i.e. the manifold
$$\mathcal{L}_\xi^0=\bigcup_{1\leq h\leq h_1/h_0} L_{\xi}(h)$$
for any given $h_1>h_1$, is a normally-hyperbolic invariant manifold of system (\ref{System6}). System (\ref{System5}) at $\epsilon=0$ is obtained from (\ref{System6})
simply by adding an equation $\dot\theta=\omega/\sqrt{h_0}$ for the phase $\theta\in\mathbb{S}^1$. As this equation is decoupled from
the first two, and the evolution of $\theta$ is non-hyperbolic (it is just a linear rotation),
the manifold $\mathcal{L}_{\xi}^0\times \mathbb{S}^1$ is the invariant normally-hyperbolic manifold for system (\ref{System5}) at $\epsilon=0$.
The normally-hyperbolic invariant manifolds are known to persist at small perturbations \cite{Fenichel}, which implies that (\ref{System5})
has an invariant manifold $\mathcal{L}_{\xi}(\epsilon)$ close to $\mathcal{L}_{\xi}\times \mathbb{S}^1$ for all small $\epsilon$ (one might need to take
smaller $\epsilon$ to make $h_1$ larger). Because the normal hyperbolicity is uniform, these manifolds exist for all codes $\xi$ for the same range of $\epsilon$
values. A formal proof of the existence of the invariant manifolds $\mathcal{L}_{\xi}(\epsilon)$ is achieved as follows. Note that the Poincare maps $\Pi_{cc'}$
are defined for all $\epsilon$ sufficiently small and depend smoothly on $\epsilon$, so they can be written in the form
\begin{equation}\label{pnmp}\begin{array}{l}\displaystyle
u_{i+1}=f_{cc'}(u_i,w_{i+1})+O(\epsilon),\ \ w_i=g_{cc'}(u_i,w_{i+1})+O(\epsilon),\\ \\ \displaystyle
h_{i+1}= h_i +O(\epsilon), \;\; \theta_{i+1}=\theta_i+O(\epsilon+\frac{1}{\sqrt{h_0}}),
\end{array}
\end{equation}
where the energy $h$ is defined by (\ref{hamnd}); the $O(\cdot)$-terms are functions of $u_i,w_{i+1},h_i,\theta_i$.
Now, lemma 1 of \cite{label1} is applied to these maps, which immediately
gives the existence of the invariant manifolds $\mathcal{L}_{\xi}(\epsilon)$ close to $\mathcal{L}_{\xi}\times \mathbb{S}^1$ provided $\epsilon$ is small and
$h_0$ is large enough.

By differentiating (\ref{hamnd}), we find that the rate of change of $h=H(y,z,p_y,p_z,\theta)$ along an orbit of (\ref{System5}) is given by
\begin{equation}
\frac{d h}{dt} = \epsilon(\omega A_{\omega} k(y,z)\cos\theta + \mathcal{O}(\epsilon)). \label{EnergyChange}
\end{equation}
Since the Poincare map (\ref{pnmp}) at $c=c'$ corresponds to one round near the billiard periodic orbit $L_c$, it follows from (\ref{EnergyChange}) that
given an arbitrarily small $\delta$ one can choose $\epsilon$ and $h_0^{-1}$ small enough such that
\begin{equation}
\frac{h_{i+1}-h_i}{\theta_{i+1}-\theta_i}>\epsilon (A_\omega v_c \cos\theta - \delta) \label{vcCalculation}
\end{equation}
in the map $\Pi_{cc}$ ($c=a$ or $b$). In other words, as long as an orbit of system (\ref{System5}) stays near $L_c$, the change in $h$ can be estimated by the inequality
\begin{equation}
\frac{dh}{d\theta}>\epsilon (A_\omega v_c \cos\theta - \delta). \label{vcdd}
\end{equation}
Recall, that the constants $v_a$ and $v_b$ are given by (\ref{spdvl}). By (\ref{vabc}) we may fix the choice of $L_a$ and $L_b$
such that
\begin{equation}\label{vab0}
v_a>v_b.
\end{equation}

Formally, system (\ref{System5}) is not of the form studied in \cite{label1}. However, the study in \cite{label1} (see Theorem 1 there)
is reduced to the study of Poincare maps $\Pi_{cc'}$ of the form which includes (\ref{pnmp}) as a partial case. Therefore, we may apply the results
of \cite{label1} to the study of the behaviour of the orbits of system (\ref{System5}) which belong to the invariant manifolds $\mathcal{L}_{\xi}(\epsilon)$
(these orbits intersect the cross-sections $\Sigma_a\cup \Sigma_b$, so they are defined by the Poincare maps $\Pi_{cc'}$). Namely, we have two main conclusions.
First, we consider the codes $\xi$ such that the symbols $a$ and $b$ go always in blocks of some fixed size, large enough for the transitions from a neighbourhood
of $L_a$ to a neighbourhood of $L_b$ or from a neighbourhood of $L_b$ to a neighbourhood of $L_a$ to happen relatively rarely, so the contribution of these transitions
to the change in $h$ can be neglected (more precisely, it is absorbed in the small $\delta$ term in (\ref{vcdd0})). Then, by (\ref{vcdd}),
we obtain that for the orbits that stay on the invariant manifolds $\mathcal{L}_{\xi}(\epsilon)$ with such codes
$\xi$ the evolution of $h$ is estimated by
\begin{equation}
\frac{dh}{d\theta}>\epsilon (A_\omega v_{c(t)} \cos\theta - \delta) \label{vcdd0}
\end{equation}
where $c(t)=a$ or $b$ indicates where the orbit finds itself at the moment $t$, near $L_a$ or near $L_b$. Second, by repeating the corresponding construction
in \cite{label1}, for each small $\epsilon$ we can find a code $\xi$ and an initial condition on the corresponding manifold $\mathcal{L}_{\xi}(\epsilon)$ such that
for the corresponding orbit
$$c(t)=a \quad\mbox{when}\quad A_\omega \cos\theta>0, \qquad\mbox{and}\qquad
c(t)=b \quad\mbox{when}\quad A_\omega \cos\theta<0.$$
For such orbit, we can rewrite (\ref{vcdd0}) as
$$\frac{dh}{d\theta}>\epsilon (A_\omega \cos\theta \frac{v_a+v_b}{2}+|A_\omega \cos\theta|\frac{v_a-v_b}{2}- \delta).$$
By integrating over one period of $\theta=\omega t$, we find that for this particular orbit the change of the energy $h$ over
each consecutive $2\pi/\omega$ interval of time is estimated by
$$\Delta h>\epsilon (|A_\omega| (v_a-v_b)-2\pi\delta)>0$$
(recall that $\delta$ can be taken as small as we need and $v_a>v_b$ by assumption of the theorem). Thus, the energy of the chaotic oscillator
steadily grows along the chosen orbit (at a linear rate), which completes the proof of the theorem.

\section*{Acknowledgements} We are grateful to V.Gelfreich, A.Vladimirov, and V.Rom-Kedar for useful discussions.
This work was supported by the Leverhulme Trust grant RPG-279 and the Russian Ministry of Education and Science grants 14.B37.21.0862.

\end{document}